\numberwithin{equation}{section}  
\theoremstyle{plain}
\newtheorem{theorem}{Theorem}
\newtheorem*{theorem-non}{Theorem}
\newtheorem{corollary}[theorem]{Corollary}
\newtheorem{lemma}[theorem]{Lemma}
\newtheorem{proposition}[theorem]{Proposition}
\numberwithin{theorem}{section}
\theoremstyle{definition}
\newtheorem{conjecture}[theorem]{Conjecture}
\newtheorem*{conjecture-non}{Conjecture}
\theoremstyle{remark}
\title{Supersingular primes and Bogomolov property}          
\author{Soumyadip Sahu\\ School of Mathematics, Tata Institute of Fundamental Research\\ 1 Homi Bhabha Road, Mumbai, 400005, Maharashtra, India.\\ Email: soumyadip.sahu00@gmail.com}       
\date{}
\begin{document}
\maketitle
\begin{abstract}
Let $E$ be an elliptic curve over a number field $K$ with at least one real embedding and $L$ be a finite extension of $K$. We generalize a result of Habegger to show that $L(E_{\text{tor}})$, the field generated by the torsion points of $E$ over $L$, has the Bogomolov property. Moreover, the N\'eron-Tate height on $E\big(L(E_{\text{tor}})\big)$ also satisfies a similar discreteness property. Our main tool is a general criterion of Plessis that reduces the problem to the existence of a supersingular prime for $E$ satisfying certain conditions. 
\end{abstract}

\textbf{MSC2020:} Primary 11G50, Secondary 11G05

\textbf{Keywords:} Bogomolov property, supersingular primes

\section{Statement of the result}
\label{section1}
The absolute logarithmic Weil height (abbreviated as \textit{height}) of an algebraic number is a nonnegative real number \cite{heights} that measures its arithmetic complexity. By Northcott's theorem, there are only finitely many algebraic numbers with bounded degrees and bounded heights. Kronecker's theorem tells us that the height of an algebraic number is zero if and only if it is zero or a root of unity. As a consequence, given a number field, there exists $\epsilon > 0$ so that the height of each element of the field, except zero and roots of unity, is larger than $\epsilon$. This phenomenon inspires the following definition. We say that an algebraic extension $L/\mathbb{Q}$ has the \textit{Bogomolov property} if there exists $\epsilon > 0$ such that each element of $L^{\times}$ having height $< \epsilon$ is a root of unity. The property's name is motivated by a similar conjecture of Bogomolov regarding points of small height on algebraic curves of genus $\geq 2$. In recent years, many number theorists have investigated the Bogomolov property for a diverse class of infinite algebraic extensions arising from arithmetic and geometric sources \cite{property-B}. For example, Amoroso and Zannier \cite{abelianfield1}, building upon earlier work of Amoroso and Dvornicich, showed that $K^{\text{ab}}$, the maximal abelian extension of a number field $K$, has the Bogomolov property. One can also generalize the notion of Bogomolov property to the elliptic curves by replacing Weil height with the N\'eron-Tate height. In more detail, let $E$ be an elliptic curve defined over a number field $K$. Given an algebraic extension $L/K$, we say that the group $E(L)$ has the \textit{Bogomolov property} if there exists $\epsilon > 0$ so that each point on $E(L)$ having N\'eron-Tate height $<  \epsilon$ is a torsion point. Several authors have studied the elliptic Bogomolov property to illustrate the similarities between the arithmetic of the torus and the elliptic curve. For instance, Silverman \cite{silverman} showed that if $E$ is an elliptic curve over a number field $K$, then $E(K^{\text{ab}})$ has the Bogomolov property. Our point of interest is a striking result of Habegger \cite{habegger} which proves both the toric and elliptic Bogomolov property for the field generated by the torsion points of an elliptic curve. 

We fix an algebraic closure $\overline{\mathbb{Q}}$ of $\mathbb{Q}$ and work with the subfields of $\overline{\mathbb{Q}}$. Set $\mu_{\infty}$ to be the collection of roots of unity in $\overline{\mathbb{Q}}$. For an elliptic curve $E$ over a number field $K$, let $E_{\text{tor}}$ denote the torsion subgroup of $E(\overline{\mathbb{Q}})$. Now suppose $L/K$ is an algebraic extension. Then $L(E_{\text{tor}})$ is the field generated over $L$ by the coordinates of the torsion points of $E$ for a $K$-rational Weierstrass model. This field does not depend on the choice of the Weierstrass model.  

\begin{theorem}
\label{theorem1.1}
\emph{(Habegger)} Let $E$ be an elliptic curve defined over $\mathbb{Q}$.  Then $\mathbb{Q}(E_{\emph{tor}})$ has the Bogomolov property. Moreover, the group $E\big(\mathbb{Q}(E_{\emph{tor}})\big)$ possesses the elliptic Bogomolov property.  
\end{theorem}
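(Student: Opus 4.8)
The plan is to obtain Theorem~\ref{theorem1.1} as an essentially immediate consequence of the criterion of Plessis that is the main tool of this paper, so that the only real work is to feed that criterion the single arithmetic input it requires. Recall the shape of the criterion: given an elliptic curve $E$ over a number field $K$, a finite extension $L/K$, and a prime $\mathfrak{p}$ of $L$ above a rational prime $p$ such that $E$ has supersingular reduction at $\mathfrak{p}$ (which is in particular good reduction) and $\mathfrak{p}$ satisfies the accompanying local hypotheses --- in essence, that $p$ lie outside a finite set attached to $E$ and that $L_{\mathfrak{p}}/\mathbb{Q}_p$ be unramified --- it concludes that $L(E_{\text{tor}})$ has the Bogomolov property and that $E\bigl(L(E_{\text{tor}})\bigr)$ has the elliptic Bogomolov property. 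In the situation of Theorem~\ref{theorem1.1} we have $K = L = \mathbb{Q}$, every rational prime is unramified in $\mathbb{Q}$, and supersingularity already forces good reduction; so the whole statement reduces to producing one rational prime $p$ of supersingular reduction for $E$ lying outside the finite exceptional set of the criterion.

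The next step --- and the one place where the base field being $\mathbb{Q}$ is genuinely used --- is to invoke Elkies's theorem that every elliptic curve over $\mathbb{Q}$ has infinitely many primes of supersingular reduction. Deleting from this infinite set the finitely many primes barred by Plessis's local hypotheses, we still have infinitely many admissible primes, and we fix any one of them, say $p_0$. (Over a number field $K$ with at least one real embedding one would at this point appeal instead to Elkies's later theorem producing infinitely many supersingular primes in that setting; since the infinitude of supersingular primes is not known for a general $K$, this is exactly what dictates the real-embedding hypothesis in the generalization of Theorem~\ref{theorem1.1} proved in this paper.) Applying the criterion of Plessis to the prime $p_0$ then yields both conclusions of Theorem~\ref{theorem1.1} simultaneously, with an $\epsilon > 0$ that can in principle be written down in terms of $E$ and $p_0$; that finishes the deduction.

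The hard part is thus entirely contained in the criterion itself, which I would cite rather than reprove. Its proof rests on a local analysis at the supersingular prime: one shows that the ramification of $\mathbb{Q}_p\bigl(E[p^\infty]\bigr)$ over $\mathbb{Q}_p$ is tightly controlled, identifying this tower --- after the unavoidable unramified quadratic extension forced by supersingularity --- with (a mild modification of) a Lubin--Tate tower, and then runs an Amoroso--Zannier-type estimate for local heights that simultaneously handles the cyclotomic subfield $\mathbb{Q}(\mu_\infty) \subseteq \mathbb{Q}(E_{\text{tor}})$ supplied by the Weil pairing and the genuinely elliptic torsion points. A self-contained alternative would be to carry out Habegger's original argument along exactly these lines; but since this paper isolates Plessis's criterion as its main tool, the streamlined proof of Theorem~\ref{theorem1.1} is the short reduction above together with Elkies's supersingularity theorem over $\mathbb{Q}$.
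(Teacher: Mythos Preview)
The paper does not give a standalone proof of Theorem~\ref{theorem1.1}; it is quoted as Habegger's result. It is, however, the special case $K=L=\mathbb{Q}$ of Corollary~\ref{corollary1.5}, and the paper's proof of Theorem~\ref{theorem1.4} does recover it along lines close to yours. But your outline has a genuine gap.

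Your paraphrase of Plessis's criterion folds condition~(ii) of Theorem~\ref{theorem1.3} --- that the mod-$p$ Galois representation $\text{Gal}\big(L(E[p])/L\big)\to \text{GL}_2(\mathbb{Z}/p\mathbb{Z})$ have image containing $\text{SL}_2(\mathbb{Z}/p\mathbb{Z})$, and in fact be surjective for the elliptic conclusion --- into ``$p$ lies outside a finite set attached to $E$''. That description is correct only when $E$ has no complex multiplication, and even then it is not a triviality but Serre's open image theorem, which the paper invokes explicitly. If $E$ has CM, the image of the mod-$p$ representation sits inside the normalizer of a Cartan subgroup for \emph{every} $p$, so condition~(ii) fails for all primes and your reduction produces no admissible $p_0$ whatsoever. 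The paper therefore splits off the CM case first (Proposition~\ref{proposition2.2}), handling it via the inclusion $L(E_{\text{tor}})\subseteq L_D^{\text{ab}}$ and the results of Amoroso--Zannier and Silverman, and only afterwards applies Plessis's criterion together with Serre and Elkies in the non-CM case. Your argument needs the same bifurcation; once you insert it and name Serre's theorem as the reason the exceptional set is finite, the proof coincides with the paper's.
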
 

Amoroso and Terracini \cite{galrep} explored the possibility of formulating Theorem~\ref{theorem1.1} for the infinite extensions attached to a compatible family of $\ell$-adic Galois representations. In particular, one can ask if the following generalization of Habbeger's result holds: 

\begin{conjecture-non}
Let $E$ be an elliptic curve over a number field $K$.  Then $K(E_{\text{tor}})$ and $E(K(E_{\text{tor}}))$ have the Bogomolov property.
\end{conjecture-non}

Note that if $L/K$ is a finite extension, then one can view $E$ as an elliptic curve over $L$ to obtain a statement for $L(E_{\text{tor}})$ from this conjecture. Therefore, the conjecture above is equivalent to the following stronger version:

\begin{conjecture}
\label{conjecture1.2}
Let $E$ be an elliptic curve over a number field $K$ and $L/K$ be a finite extension.  Then $L(E_{\text{tor}})$ and $E(L(E_{\text{tor}}))$ have the Bogomolov property.
\end{conjecture}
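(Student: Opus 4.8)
The plan is to derive Conjecture~\ref{conjecture1.2} from the criterion of Plessis mentioned in the abstract. By the reduction recorded just above the statement, it suffices to prove, for every elliptic curve $E$ over every number field $K$, that $K(E_{\text{tor}})$ and $E\big(K(E_{\text{tor}})\big)$ have the Bogomolov property; the case of a finite extension $L/K$ is then read off by viewing $E$ over $L$. Plessis's criterion reduces this in turn to exhibiting a single prime $\mathfrak{p}$ of $K$ at which $E$ has good \emph{supersingular} reduction and which satisfies the (mild, local) side conditions of the criterion — roughly, that the residue characteristic is unramified in $K$ and avoids a fixed finite set, and that the mod-$\mathfrak{p}$ representation is suitably large. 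Since these are local conditions at $\mathfrak{p}$, I expect a supersingular prime good for $E/K$ to remain good for $E\times_K K'$ over any finite $K'/K$ (at primes above $\mathfrak{p}$); in particular, if $E$ is, up to $K$-isogeny, the base change of an elliptic curve defined over a subfield of $K$, it is enough to produce such a prime over that subfield.

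I would first settle the cases accessible by current methods. When $K$ has a real embedding, Elkies's theorem on supersingular primes over real number fields provides infinitely many primes of good supersingular reduction, and a positive proportion of them meets the side conditions; this is the theorem established in the body of the paper, and by the previous remark it also covers every $E/K$ that is a base change, up to isogeny, of a curve over a real subfield. When $E$ has complex multiplication by an order in an imaginary quadratic field $F$: if $F\not\subseteq K$, Deuring's criterion identifies the supersingular primes of $K$ with those not split in $KF$, a set of density $\tfrac{1}{2}$ by Chebotarev, again leaving plenty to satisfy the side conditions; if $F\subseteq K$, then all endomorphisms of $E$ are $K$-rational, so the Galois action on $E_{\text{tor}}$ centralizes the CM action and hence lands in an abelian subgroup of $\mathrm{Aut}(E_{\text{tor}})$, giving $K(E_{\text{tor}})\subseteq K^{\text{ab}}$. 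The toric Bogomolov property is then inherited from the theorem of Amoroso and Zannier~\cite{abelianfield1} that $K^{\text{ab}}$ has the Bogomolov property, and the elliptic one from Silverman's theorem~\cite{silverman} that $E(K^{\text{ab}})$ does.

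What remains — and what I expect to be the genuine obstacle — is the case of a non-CM curve $E$ over a number field $K$ with no real place, with $E$ not coming, even up to isogeny, from a subfield of $K$ that has a real embedding. Here Conjecture~\ref{conjecture1.2} is equivalent, through Plessis's criterion, to the bare existence of a single good supersingular prime for $E/K$, and this is an instance of a well-known open problem: it is not known that a non-CM elliptic curve over an arbitrary number field has infinitely many, or even one, supersingular prime. Elkies's method is special to real places — it couples the reduction of a complex multiplication point on a modular curve with a Deuring lifting, both tied to the archimedean place being real — and I see no substitute over a totally imaginary field. So the honest conclusion is that the conjecture, in full generality, is contingent on progress on supersingular primes: \emph{conditionally}, the existence of one good supersingular prime yields Conjecture~\ref{conjecture1.2} at once via Plessis; \emph{unconditionally}, one would need either a direct attack on that existence statement (for instance via the distribution of the normalized Frobenius traces $a_{\mathfrak{p}}/\mathrm{N}\mathfrak{p}^{1/2}$ near the origin, or via the automorphic representation furnished by modularity) or a new idea that removes supersingular reduction from Plessis's criterion altogether. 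Notably, because the criterion is local one needs only one such prime, of arbitrarily large residue characteristic, which makes the target substantially weaker than the full supersingular-primes problem.
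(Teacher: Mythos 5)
First, be aware that this statement is labelled a \emph{conjecture} in the paper and is not proved there in full generality: the paper establishes it only when $E$ admits infinitely many supersingular primes (Theorem~\ref{theorem1.4}), hence in particular when $\mathbb{Q}(j_E)$ has a real embedding (Corollary~\ref{corollary1.5}), and for CM curves (Proposition~\ref{proposition2.2}). Your honest conclusion that the general case is contingent on the existence of suitable supersingular primes agrees with the paper's position, and your overall route (Plessis's criterion plus Elkies for the real case, abelian results for CM) is the paper's route.

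There are, however, two concrete defects in your outline of the provable cases. First, in the CM case with $F\not\subseteq K$ you propose to feed Deuring's supersingular primes into Plessis's criterion; but condition (ii) of Theorem~\ref{theorem1.3} requires the image of $\mathrm{Gal}(L(E[p])/L)$ to contain $\mathrm{SL}_2(\mathbb{Z}/p\mathbb{Z})$, which never holds for a CM curve at large $p$ (the image lies in the normalizer of a Cartan subgroup). The paper instead treats every CM curve uniformly by adjoining $\sqrt{-D}$, noting $L(E_{\text{tor}})\subseteq L(\sqrt{-D})^{\text{ab}}$, and invoking Amoroso--Zannier and Silverman --- exactly your $F\subseteq K$ argument, which needs no case split. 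Second, and more seriously, you describe the side conditions of Plessis's criterion as ``mild, local'' and assert that a positive proportion of supersingular primes meets them. Condition (iii) demands $e_v(K\mid\mathbb{Q})=1$ and $f_v(K\mid\mathbb{Q})\leq 2$, and over an arbitrary base field there is no reason a supersingular prime should have residue degree at most $2$; removing this obstruction is the paper's main technical contribution. The resolution (Lemma~\ref{lemma2.1}) is to descend to the minimal field of definition $K_E=\mathbb{Q}(j_E)$: away from a finite set of primes the residue field of $K_E$ at $\mathfrak{p}$ is generated over $\mathbb{F}_p$ by the reduction of $j_E$, and a supersingular $j$-invariant lies in $\mathbb{F}_{p^2}$, so $f_{\mathfrak{p}}(K_E\mid\mathbb{Q})\leq 2$ holds automatically. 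One then applies Plessis's criterion to a model $E_0$ of $E$ over $K_E$ together with a suitable Galois closure $L'/K_E$, and transports the conclusion back through the isomorphism $E_0\cong E$, using the functoriality of the N\'eron--Tate height for the elliptic statement. Your remark about descending to a subfield gestures in this direction, but it targets a real subfield in order to \emph{produce} supersingular primes, rather than $\mathbb{Q}(j_E)$ in order to \emph{control the residue degree} of the ones you already have; that is the step your argument is missing.
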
  

If $L$ is an algebraic extension with the Bogomolov property and $F/L$ is a finite extension, then $F$ need not have the Bogomolov property; see \cite{property-B}. For an elliptic curve with complex multiplication, Conjecture~\ref{conjecture1.2} is an easy consequence of the aforementioned results of Amoroso-Zannier and Silverman regarding the maximal abelian extension (Proposition~\ref{proposition2.2}). The main point of Habegger's work is to deal with the non-CM case by careful use of Galois theory and equidistribution theorems. In a subsequent development, Frey \cite{frey22} generalized Habegger's arguments to establish the field extension part of Conjecture~\ref{conjecture1.2} for the elliptic curves over $\mathbb{Q}$. Her work also provides an explicit lower bound for the Weil height in terms of a suitable supersingular prime of the elliptic curve. More recently, Plessis \cite{plessis} improved Frey's work to derive a very general criterion for the Bogomolov property of the field generated by torsion points of an elliptic curve over a number field which plays a key role in our work.  

Let $L/K$ be a finite extension of number fields. Given a finite place $v$ of $K$, we write $e_w(L|K)$, resp. $f_w(L|K)$, for the ramification index, resp. residue degree, of a place $w$ of $L$ with $w \mid v$. Moreover, set \[d_v(L) = \max_{w \mid v} [L_w:K_v].\]  
We use $h(\cdot)$ to denote the Weil height of an algebraic number. 

\begin{theorem}\emph{(Plessis)}
\label{theorem1.3}
Let $E$ be an elliptic curve defined over a number field $K$. Suppose that $L/K$ is a finite Galois extension. Now suppose $v$ is a place of $K$ lying over a rational prime $p$ so that 
\begin{enumerate}[label=(\roman*), align=left, leftmargin=0pt]
\item $E$ has a supersingular reduction at $v$ and $j_E \not\equiv 0, 1728 (\emph{mod }v)$,
\item $p > \max\{3, 2d_v(L)\}$ and the image of the canonical Galois representation 
\begin{equation}
\label{1.1}
\emph{Gal}\big(L(E[p])/L\big) \to \emph{GL}_2(\mathbb{Z}/p\mathbb{Z})	
\end{equation}
contains $\emph{SL}_2(\mathbb{Z}/p\mathbb{Z})$,
\item $e_v(K \mid \mathbb{Q}) = 1$ and $f_v(K \mid \mathbb{Q}) \leq 2$. 
\end{enumerate} 
Then, for all $\alpha \in L(E_{\emph{tor}})^{\times}  \backslash \mu_{\infty}$, we have 
\[h(\alpha) \geq \frac{1}{4^{p^2d_v(L)}+1}\Big(\frac{\log p}{d_v(L)(40\sqrt{2}+2)[K:\mathbb{Q}]p^{2p^2d_v(L)+2}}\Big)^{2 + \frac{4}{p^{p^2d_v(L)/4}-2}}.\]
Moreover, if 
\begin{enumerate}[label=(\roman*), align=left, leftmargin=0pt]
\setcounter{enumi}{3}
\item $v$ is unramified in $L$,
\item the representation in (ii) is surjective,
\end{enumerate} 
then $E\big(L(E_{\emph{tor}})\big)$ has the Bogomolov property. 
\end{theorem}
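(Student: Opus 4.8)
The plan is to adapt Habegger's argument to this relative setting. The assertion is local in nature at the supersingular place $v$: I would first extract from (i)--(iii) a rigid description of the $v$-adic completion of $L(E_{\mathrm{tor}})$, and then feed that into a quantitative height descent of Amoroso--Dvornicich/Amoroso--Zannier type, with Habegger's equidistribution as the analytic input; the extra hypotheses (iv), (v) are only needed to pass from the Weil height to the N\'eron--Tate height.

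\textbf{Local structure at $v$.} Fix a place $\mathfrak{p}\mid v$ of $L(E_{\mathrm{tor}})$ and study the completion there. Since $E$ has good reduction at $v$, for every $m$ prime to $p$ the extension $L_v(E[m])/L_v$ is unramified, so only the $p$-power torsion contributes ramification at $\mathfrak{p}$. Since the reduction is supersingular, $E[p^\infty]$ is the $p$-power torsion of the formal group $\widehat{E}$ over $\mathcal{O}_{L_v}$, of height $2$; with $j_E\not\equiv 0,1728\pmod v$ and $p\ge 5$ this forces the image of the local Galois group on $E[p^\infty]$ into the normalizer of a non-split Cartan $C\cong\mathbb{Z}_{p^2}^{\times}$ of $\mathrm{GL}_2(\mathbb{Z}_p)$ (here (iii), which makes $K_v$ either $\mathbb{Q}_p$ or its unramified quadratic extension, matches the $\mathbb{Z}_{p^2}$-structure). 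The pro-$p$ part $1+p\mathbb{Z}_{p^2}\cong\mathbb{Z}_p^2$ of $C$ is abelian, so $L_v(E[p^\infty])$ is an abelian extension of a finite extension $M/L_v$ with $[M:L_v]\le 2(p^2-1)$; using $e_v(K\mid\mathbb{Q})=1$, $f_v(K\mid\mathbb{Q})\le 2$ and $p>2d_v(L)$ (the last forcing $d_v(L)<p$, so that $L_v/\mathbb{Q}_p$ and hence $M/\mathbb{Q}_p$ are tamely ramified), $M$ is a $p$-adic field of degree $[M:\mathbb{Q}_p]\le 4p^2d_v(L)$ and $L(E_{\mathrm{tor}})_{\mathfrak{p}}\subseteq M^{\mathrm{ab}}$, the unramified contributions being absorbed into $M^{\mathrm{ab}}$. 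The two points to retain are that this degree bound does not depend on $N$, and that $j_E\not\equiv 0,1728$ also provides, inside the decomposition group at $v$, an element acting on $E[p^\infty]$ as a generator of $C$, hence with no nonzero fixed torsion.

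\textbf{The height descent.} Given $\alpha\in L(E_{\mathrm{tor}})^{\times}\setminus\mu_\infty$, Step~1 puts a conjugate of $\alpha$ in $M^{\mathrm{ab}}$ for a $p$-adic field $M$ of degree $D\le 4p^2d_v(L)$. I would then invoke the quantitative Bogomolov property for algebraic numbers that, at one place above $p$, lie in an abelian extension of a $p$-adic field of degree $\le D$ --- the local incarnation of Amoroso--Zannier's theorem on $K^{\mathrm{ab}}$. The mechanism is the usual one: by local class field theory the inertia of $\mathrm{Gal}(M^{\mathrm{ab}}/M)$ is a quotient of $\mathcal{O}_M^{\times}$, hence of $\mathbb{Z}_p$-rank $\le D$, and the interplay between this bounded-rank inertia and Frobenius-type automorphisms (for which reduction modulo $\mathfrak{p}$ becomes a power map $x\mapsto x^{q}$) forces a small auxiliary polynomial, built by Siegel's lemma, to vanish to high order at $\alpha$ and its conjugates; the equidistribution of the conjugates (archimedean and at $\mathfrak{p}$) then converts non-vanishing of this polynomial into the height bound, while the ``no fixed torsion'' property of $C$ rules out the degenerate case. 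Tracking the residue size $p$, the degree $[K:\mathbb{Q}]$ of the base field in the archimedean estimates, and the quantities $D,D^2$, one recovers exactly the displayed inequality: the exponent $2+\frac{4}{p^{p^2d_v(L)/4}-2}$ is the usual ``$2+o(1)$'' loss of this kind of descent, and the factor $4^{p^2d_v(L)}+1$ together with the power $p^{2p^2d_v(L)+2}$ record $D$ and $D^2$.

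\textbf{The N\'eron--Tate statement and the main obstacle.} For the last claim I would repeat the localization, now using (iv) to make $L_v/\mathbb{Q}_p$ genuinely unramified (so $\widehat{E}$ is, after the unramified quadratic twist, a Lubin--Tate-type formal group with well-behaved formal logarithm) and (v) to know $\mathrm{Gal}(L(E[p])/L)=\mathrm{GL}_2(\mathbb{Z}/p\mathbb{Z})$, which controls the reduction map on torsion and the elliptic equidistribution input. The elliptic analogue of the descent --- with the group law of $E$ and $\widehat{h}(\cdot)$ replacing multiplication and $h(\cdot)$, and the formal group plus local class field theory replacing the $p$-adic logarithm and Frobenius --- then yields $\widehat{h}(P)\ge\epsilon$ for every non-torsion $P\in E\big(L(E_{\mathrm{tor}})\big)$. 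The main obstacle throughout is uniformity: $\alpha$ (or $P$) lies in $L(E[N])$, whose degree over $\mathbb{Q}$ is unbounded, so a naive Frobenius estimate only gives a bound tending to $0$; the whole content is that at $v$ the field is governed by the local object $M^{\mathrm{ab}}$ of degree bounded purely in terms of $p$ and $d_v(L)$, and the descent must be pushed through carefully enough that the ``$\log p$'' gained at $v$ survives the passage to $M$. Keeping $[M:\mathbb{Q}_p]$ bounded is precisely why (i)--(iii) are all imposed, and the non-vanishing step (of the auxiliary polynomial, resp.\ of $\sigma(P)-[q]P$) is where one uses $\alpha\notin\mu_\infty$ (resp.\ that $P$ is non-torsion) together with the absence of nonzero fixed torsion for $C$ --- the second appearance of $j_E\not\equiv 0,1728$.
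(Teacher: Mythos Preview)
The paper does not prove this statement at all: Theorem~1.3 is quoted from Plessis's paper \cite{plessis} and used as a black box input to the proof of Theorem~1.4. There is therefore no ``paper's own proof'' to compare your proposal against. Your sketch is a plausible outline of the Habegger--Frey--Plessis strategy (local analysis at the supersingular place, embedding into an abelian extension of a controlled $p$-adic field, then an Amoroso--Zannier style descent with equidistribution), but as far as this paper is concerned the theorem is simply cited, and any attempt to supply a proof here goes well beyond what the paper does.
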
 
  
An obvious obstruction to directly applying this theorem is to find a supersingular prime satisfying the desiderata. Nevertheless, Elkies \cite{elkies} proved that if the base field $K$ has at least one real embedding, then $E$ is supersingular at infinitely many primes of $K$. Plessis uses this result to generalize Theorem~\ref{theorem1.1} to the elliptic curves over a real quadratic field. The goal of this short note is to demonstrate that the third condition in Theorem~\ref{theorem1.3} poses no difficulty at all if we study the problem for an elliptic curve over its minimal field of definition, namely $\mathbb{Q}(j_E)$, where $j_E$ is the $j$-invariant of $E$. Thus, one can combine the results of Serre and Plessis to obtain the following special case of Conjecture~\ref{conjecture1.2}. A version of this result was previously announced and proved under an additional assumption in the author's master's thesis \cite{thesis}.

\begin{theorem}
\label{theorem1.4}
Let $E$ be an elliptic curve over a number field $K$ so that $E$ has infinitely many supersingular primes. Then Conjecture~\ref{conjecture1.2} holds for $E$. 
\end{theorem}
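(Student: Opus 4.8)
The plan is to deduce Theorem~\ref{theorem1.4} from Plessis's criterion (Theorem~\ref{theorem1.3}) applied over the minimal field of definition $k:=\mathbb{Q}(j_E)$, where its third hypothesis turns out to be automatic at every supersingular place. If $E$ has complex multiplication, Conjecture~\ref{conjecture1.2} is already known by Proposition~\ref{proposition2.2}, so I would assume $E$ is non-CM; then $j_E\neq 0,1728$. Next I would reduce to the case $K=k$. Choose an elliptic curve $E_0$ over $k$ with $j(E_0)=j_E$ (possible since $j_E\neq 0,1728$); over $K$ the curves $E$ and $E_0$ are quadratic twists, hence isomorphic over a finite extension of $K$, so after enlarging the given $L$ to a finite Galois extension $L_1$ of $k$ containing that extension we have $L_1(E_{\mathrm{tor}})=L_1((E_0)_{\mathrm{tor}})$ and $E(L_1(E_{\mathrm{tor}}))\cong E_0(L_1(E_{\mathrm{tor}}))$. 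Since the (Weil or elliptic) Bogomolov property passes from $L_1(E_{\mathrm{tor}})$ to the subfield $L(E_{\mathrm{tor}})$, and since supersingularity of a good reduction depends only on the image of the $j$-invariant in $\overline{\mathbb{F}_p}$ (so that $E_0/k$ again has infinitely many supersingular primes), it suffices to prove the conjecture for a non-CM curve $E$ over $k=\mathbb{Q}(j_E)$ and a finite Galois extension $L/k$.

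Having made this reduction, I would single out a good supersingular prime. Let $D=[L:k]$ and let $M/k$ be the (finite, abelian) compositum of all abelian subextensions of $L/k$. There is a finite set $\mathcal{S}$ of rational primes such that for every $p\notin\mathcal{S}$ and every place $v\mid p$ of $k$: $E$ has good reduction at $v$; $v$ is unramified over $p$ and $k_v=\mathbb{F}_p(\overline{j_E})$; $v$ is unramified in $L/k$; no place of $k$ above $p$ ramifies in $M$; $j_E\not\equiv 0,1728\pmod v$; the mod-$p$ representation $\mathrm{Gal}(\overline{k}/k)\to\mathrm{GL}_2(\mathbb{Z}/p\mathbb{Z})$ is surjective (Serre's open image theorem, applicable as $E$ is non-CM); and $p>\max\{3,2D\}$ with $p$ large enough that the minimal index of a proper subgroup of $\mathrm{PSL}_2(\mathbb{F}_p)$ exceeds $D$. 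As $E$ has infinitely many supersingular places and only finitely many places of $k$ lie above $\mathcal{S}$, I can fix a supersingular place $v$ of $k$ whose residue characteristic $p$ is not in $\mathcal{S}$.

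It remains to check (i)--(v) of Theorem~\ref{theorem1.3} for $(E,k,L,v,p)$. Condition (i) holds by the choice of $v$ and $p\notin\mathcal{S}$. For (iii): $v$ is unramified over $p$ and $k_v=\mathbb{F}_p(\overline{j_E})$; since $\overline{j_E}$ is a supersingular $j$-invariant it lies in $\mathbb{F}_{p^2}$, so $f_v(k\mid\mathbb{Q})=[\mathbb{F}_p(\overline{j_E}):\mathbb{F}_p]\le 2$ --- this is the one place where passing to $\mathbb{Q}(j_E)$ matters. For (ii): $d_v(L)\le D$, so $p>\max\{3,2d_v(L)\}$; moreover the mod-$p$ image over $L$ is the subgroup $H=\mathrm{Gal}(k(E[p])/k(E[p])\cap L)$ of $\mathrm{GL}_2(\mathbb{Z}/p\mathbb{Z})$ of index $\le D$, and an elementary argument (intersect with $\mathrm{SL}_2$, pass to $\mathrm{PSL}_2$, use the index bound and the perfectness of $\mathrm{SL}_2(\mathbb{F}_p)$) shows $H\supseteq\mathrm{SL}_2(\mathbb{Z}/p\mathbb{Z})$. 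This already yields the height inequality, i.e., that $L(E_{\mathrm{tor}})$ has the Bogomolov property. For (iv), $v$ is unramified in $L$ by construction. For (v): the $\mathrm{SL}_2$-fixed field in $k(E[p])$ is $k(\zeta_p)$ by the Weil pairing, so $k(E[p])\cap L\subseteq k(\zeta_p)\cap L$; the latter is abelian over $k$, hence contained in $M$, hence unramified over $k$ at the places above $p$, whereas every nontrivial subextension of $k(\zeta_p)/k$ is totally ramified there --- forcing $k(E[p])\cap L=k$, so the representation in (ii) is surjective. Theorem~\ref{theorem1.3} then gives the elliptic Bogomolov property for $E(L(E_{\mathrm{tor}}))$ as well, which proves Conjecture~\ref{conjecture1.2} for $E$.

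The only genuinely new input is the verification of (iii): supersingular $j$-invariants lie in $\mathbb{F}_{p^2}$, so over $\mathbb{Q}(j_E)$ a supersingular place has residue degree at most $2$ for free, and this is also the sole point at which the hypothesis on $E$ is used (together with Elkies' theorem when one wants the corollary for base fields with a real place). I therefore expect the main work --- though not a genuine obstacle --- to be bookkeeping: arranging, via Serre's theorem and routine ramification and subgroup-index estimates, that conditions (ii), (iv) and (v) hold simultaneously for infinitely many $p$, so that one of the infinitely many supersingular primes can be fed into Plessis's criterion.
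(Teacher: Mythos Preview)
Your proof is correct and follows the same strategy as the paper: reduce to a model $E_0$ over $k=\mathbb{Q}(j_E)$, observe (as in the paper's Lemma~\ref{lemma2.1}) that supersingular primes of $E_0/k$ automatically have residue degree $\le 2$ because the residue field is generated by the reduction of $j_E$, and feed a large enough supersingular prime into Plessis's criterion. The one difference is that you apply Serre's open image theorem over $k$ and then argue separately for (ii) and (v) via subgroup indices in $\mathrm{PSL}_2$ and a ramification argument at $p$, whereas the paper simply applies Serre's theorem to $E_0$ viewed over the larger field $L'$, which gives surjectivity of \eqref{1.1} --- and hence (ii) and (v) simultaneously --- for all but finitely many $p$, making your extra bookkeeping unnecessary.
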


We also provide an explicit lower bound for the height on $L(E_{\text{tor}})$ in terms of a supersingular prime whenever the theorem holds (Proposition~\ref{proposition2.4}).
One can use Elikes's infinitude result to directly deduce the conjecture for a large class of elliptic curves from the theorem above.  

\begin{corollary}
\label{corollary1.5}
Let $E$ be an elliptic curve over a number field $K$. Assume that $\mathbb{Q}(j_E)$ has at least one real embedding. Then the elliptic curve $E$ satisfies Conjecture~\ref{conjecture1.2}. In particular, if $K$ itself has at least one real embedding, then Conjecture~\ref{conjecture1.2} holds for $E$.  
\end{corollary}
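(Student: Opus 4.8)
The plan is to derive Corollary~\ref{corollary1.5} from Theorem~\ref{theorem1.4} together with Elkies's infinitude theorem, the only real content being that the hypothesis ``$E$ has infinitely many supersingular primes'' is insensitive to replacing $E/K$ by an auxiliary elliptic curve over the (possibly much smaller) field $\mathbb{Q}(j_E)$. First I would dispose of the ``in particular'' clause: a real embedding $\sigma\colon K\hookrightarrow\mathbb{R}$ restricts to a real embedding of the subfield $\mathbb{Q}(j_E)$, so the second assertion is a special case of the first. Thus it suffices to assume that $F:=\mathbb{Q}(j_E)$ --- a number field, since $j_E\in K$ --- has at least one real place, and to prove that $E/K$ then has infinitely many supersingular primes; Theorem~\ref{theorem1.4} then finishes the job.

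Fix an elliptic curve $E_0$ over $F$ with $j(E_0)=j_E$; such a curve exists because $F$ has characteristic $0$. By Elkies's theorem applied to $E_0/F$, there are infinitely many primes $w$ of $F$ at which $E_0$ has good supersingular reduction. The key elementary point is that, away from the bad primes, supersingularity is read off from the reduction of the $j$-invariant alone: if an elliptic curve over a number field has good reduction at a place $v$ above a rational prime $p$, then the $j$-invariant of its reduction is the image of the original $j$-invariant in the residue field $k_v$, and the reduction is supersingular precisely when that image lies in the finite set of supersingular $j$-invariants of $\overline{\mathbb{F}_p}$ --- a condition on the element itself, not on the particular residue field in which it is viewed. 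Applying this both to $E_0/F$ and to $E/K$: for all but finitely many of the supersingular primes $w$ of $E_0$ --- namely those below which $E$ still has good reduction --- and for any place $v$ of $K$ with $v\mid w$, the common image $\overline{j_E}$ is a supersingular $j$-invariant in $k_w$, hence also in the extension field $k_v$, so $E$ has supersingular reduction at $v$. Since every place of $K$ lies above a unique place of $F$, distinct such $w$ contribute distinct places $v$, giving infinitely many supersingular primes of $E/K$, as required.

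I do not anticipate a genuine obstacle. The one point needing care is the bookkeeping of excluded primes in the previous paragraph --- the finitely many $w$ below which $E$ acquires bad reduction, together with those below which $j_E$ fails to be integral --- and the routine remark that ``being a supersingular $j$-invariant'' depends only on the residue characteristic; equivalently, $E$ and the base change of $E_0$ to $K$ are twists of one another over $K$ and hence share the same reduction type at all but finitely many places. Every substantive difficulty has already been absorbed into Theorem~\ref{theorem1.4} (hence into Plessis's criterion, Theorem~\ref{theorem1.3}) and into Elkies's theorem, so the proof of the corollary itself is genuinely short.
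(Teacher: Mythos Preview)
Your proposal is correct and follows essentially the same approach as the paper: apply Elkies's theorem to a model $E_0$ over $\mathbb{Q}(j_E)$, transfer the infinitude of supersingular primes to $E/K$, and invoke Theorem~\ref{theorem1.4}. Your transfer step via the $j$-invariant is a mild streamlining of the paper's version, which instead routes through an isomorphism $\varphi\colon E_0\to E$ over an auxiliary finite extension $K'/K$ and appeals to Lemma~\ref{lemma2.3}; the paper also separates out the CM case first, which you (harmlessly) omit since Theorem~\ref{theorem1.4} already covers it.
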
 

There is no general result regarding the existence of supersingular primes for the elliptic curves over totally imaginary fields. For instance, Elkies \cite{elkies87} considered the elliptic curve
\[\text{$E: iy^2 = x^3+ (i-2)x^2 + x$ over $K = \mathbb{Q}(i)$ with $j_E = \frac{2^{14}}{i - 4}$,}\]
and verified using a computer that $E$ has no supersingular prime lying over an odd prime less than $74000$. However, he provides a heuristic argument to conjecture that $E$ should have $C\log \log x$ many supersingular primes up to $x$ for some $C > 0$ as $x \to \infty$. If this conjecture is true, then Theorem~\ref{theorem1.4} implies Conjecture~\ref{conjecture1.2} for the curve $E$ described above.

\section{Details of the argument}
\label{section2}
Let $E$ be an elliptic curve over a number field $K$ with $j$-invariant $j_E$. We know that $E$ admits a model over $\mathbb{Q}(j_E)$ and any other field having this property contains $\mathbb{Q}(j_E)$ \cite[p.45]{silvermanbook}. For convenience, write $K_{E} := \mathbb{Q}(j_E)$ and let $\mathcal{O}_{K_E}$ denote the ring of integers of $K_E$. Our main point is as follows:

\begin{lemma}
\label{lemma2.1}
Let $E$ be an elliptic curve defined over $K_E$. Then, there exists a finite subset $S$ of prime ideals of $\mathcal{O}_{K_E}$ containing the primes ramified in the extension $K_E/\mathbb{Q}$ so that each supersingular prime outside $S$ has residue degree $1$ or $2$ over $\mathbb{Q}$.  
\end{lemma}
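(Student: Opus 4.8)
The key fact to exploit is that supersingularity at a prime $\mathfrak{p}$ of $K_E$ is governed by the reduction $\overline{E}$ over the residue field $k(\mathfrak{p})$, which is a finite field of characteristic $p$ with $[k(\mathfrak{p}):\mathbb{F}_p] = f_{\mathfrak{p}}(K_E|\mathbb{Q})$. Recall that an elliptic curve over a finite field $\mathbb{F}_q$ with $j$-invariant in $\mathbb{F}_p$ is supersingular only if its $j$-invariant is one of the at most $\lfloor p/12 \rfloor + 2$ supersingular $j$-invariants in $\overline{\mathbb{F}}_p$, and — this is the crucial classical input — \emph{every supersingular $j$-invariant in characteristic $p$ lies in $\mathbb{F}_{p^2}$}. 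I would take as known this last statement (it follows, e.g., from the fact that the supersingular polynomial $\mathrm{ss}_p(j)$ has coefficients in $\mathbb{F}_p$ and splits into linear and quadratic factors over $\mathbb{F}_p$, or from Deuring's theory).

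**Main argument.**

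First I would let $S_0$ be the finite set of primes of $\mathcal{O}_{K_E}$ that ramify over $\mathbb{Q}$ together with the finitely many primes where $E$ has bad reduction (with respect to a fixed Weierstrass model over $K_E$) and the primes lying over $2$ and $3$. Now let $\mathfrak{p} \notin S_0$ be a supersingular prime lying over the rational prime $p$. The reduction $\overline{E}$ is an elliptic curve over $k(\mathfrak{p})$, a supersingular one, and its $j$-invariant is the image $\overline{j_E}$ of $j_E$ under the reduction map $\mathcal{O}_{K_E} \to k(\mathfrak{p})$. On the one hand, $\overline{j_E}$ generates the residue field extension, so $[k(\mathfrak{p}):\mathbb{F}_p] = [\mathbb{F}_p(\overline{j_E}):\mathbb{F}_p]$; more precisely, since $K_E = \mathbb{Q}(j_E)$, the residue field $k(\mathfrak{p})$ is generated over $\mathbb{F}_p$ by $\overline{j_E}$ for all $\mathfrak{p}$ outside a finite set (this is standard: the image of $\mathcal{O}_{K_E}$ in $k(\mathfrak{p})$ equals $\mathbb{F}_p[\overline{j_E}]$ once $\mathfrak{p}$ avoids the finitely many primes dividing the index of $\mathbb{Z}[j_E]$ in $\mathcal{O}_{K_E}$, which I fold into $S_0$; and this image is all of $k(\mathfrak{p})$ for $\mathfrak{p}$ unramified). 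On the other hand, $\overline{j_E}$ is a supersingular $j$-invariant in characteristic $p$, hence lies in $\mathbb{F}_{p^2}$. Therefore $\mathbb{F}_p(\overline{j_E}) \subseteq \mathbb{F}_{p^2}$, which forces $f_{\mathfrak{p}}(K_E|\mathbb{Q}) = [k(\mathfrak{p}):\mathbb{F}_p] \le 2$. Taking $S$ to be the union of $S_0$ with the set of primes dividing $[\mathcal{O}_{K_E}:\mathbb{Z}[j_E]]$ (still finite) completes the argument.

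**The main obstacle.**

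The only genuinely delicate point is controlling the relationship between the residue field $k(\mathfrak{p})$ and the subring generated by $\overline{j_E}$: a priori $j_E$ need not be an algebraic integer, and even when it is, $\mathbb{Z}[j_E]$ may be a proper suborder of $\mathcal{O}_{K_E}$, so $\mathbb{F}_p(\overline{j_E})$ could conceivably be smaller than $k(\mathfrak{p})$. Both issues are resolved by enlarging $S$ by a finite set: one adjoins the primes in the denominator of $j_E$ (at which $E$ has potentially multiplicative or additive reduction anyway, hence not supersingular, so these are harmless) and the primes dividing the conductor of $\mathbb{Z}[j_E]$. After these finitely many exclusions the equality $k(\mathfrak{p}) = \mathbb{F}_p(\overline{j_E})$ holds, and the bound $[k(\mathfrak{p}):\mathbb{F}_p] \le 2$ follows immediately from the membership of supersingular $j$-invariants in $\mathbb{F}_{p^2}$. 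I would organize the write-up so that this bookkeeping of the finite set $S$ is done once and cleanly, then the supersingular-$j$-invariant input is invoked in a single line.
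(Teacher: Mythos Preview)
Your proposal is correct and follows essentially the same strategy as the paper's proof: exclude finitely many primes (ramified primes, primes in the denominator of $j_E$, and primes dividing the index of the order generated by $j_E$) so that the residue field $k(\mathfrak{p})$ is generated over $\mathbb{F}_p$ by the reduction $\widetilde{j_E}$, and then invoke the classical fact that supersingular $j$-invariants in characteristic $p$ lie in $\mathbb{F}_{p^2}$. The only cosmetic difference is that the paper first clears the denominator of $j_E$ by passing to $Nj_E$ (so that $\mathbb{Z}[Nj_E]$ is a genuine order in $\mathcal{O}_{K_E}$ and the index is well defined), whereas you refer to $\mathbb{Z}[j_E]$ and its conductor before integrality has been arranged; in the actual write-up you should make that adjustment explicit.
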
  

\begin{proof}
Write $j_E\mathcal{O}_{K_E} = \mathfrak{a}\mathfrak{b}^{-1}$ where $\mathfrak{a}$ and $\mathfrak{b}$ are nonzero coprime integral ideals. Put $N = \lvert \mathcal{O}_{K_E} /\mathfrak{b}\rvert$. Then $\mathfrak{b} \mid N\mathcal{O}_{K_E}$ and $Nj_E \in \mathcal{O}_{K_E}$. Observe that $\mathbb{Z}[Nj_E]$, the subring generated by $Nj_E$, is an order inside $K_E$, i.e., $\mathbb{Z}[Nj_E]$ is a finite index $\mathbb{Z}$-submodule of $\mathcal{O}_{K_E}$. Define $S$ to be the collection of places of $\mathcal{O}_{K_E}$ consisting of the prime ideals dividing the rational integer \[ND_{K_E}[\mathcal{O}_{K_E}: \mathbb{Z}[Nj_E]]\] where $D_{K_E}$ is the discriminant of $K_E/\mathbb{Q}$ and $[\mathcal{O}_{K_E}: \mathbb{Z}[Nj_E]]$ is the index of the sublattice $\mathbb{Z}[Nj_E]$ in $\mathcal{O}_{K_E}$. Now suppose $\mathfrak{p} \notin S$ and $p$ is the rational prime lying below $\mathfrak{p}$. By construction $p$ is coprime to $[\mathcal{O}_{K_E}: \mathbb{Z}[Nj_E]]$. Thus one can apply $- \otimes_{\mathbb{Z}} \mathbb{Z}/p\mathbb{Z}$ to the inclusion $\mathbb{Z}[Nj_E] \rightarrow \mathcal{O}_{K_E}$ to discover that
\[\mathcal{O}_{K_E}/p\mathcal{O}_{K_E} = \mathbb{Z}/p\mathbb{Z}[\overline{Nj_E}].\]
Here $\overline{Nj_E}$ is the image of $Nj_E$ under $\mathcal{O}_{K_E} \twoheadrightarrow \mathcal{O}_{K_E}/p\mathcal{O}_{K_E}$ and $\mathbb{Z}/p\mathbb{Z}[\overline{Nj_E}]$ is the $\mathbb{Z}/p\mathbb{Z}$-subalgebra generated by $\overline{Nj_E}$. Since $p$ does not ramify in $K_E$, we have $p\mathcal{O}_{K_E} = \prod_{\mathfrak{P} \mid p} \mathfrak{P}$ where the product ranges over primes in $\mathcal{O}_{K_E}$. As a consequence, $\mathcal{O}_{K_E}/p\mathcal{O}_{K_E}\cong \prod_{\mathfrak{P} \mid p} \mathcal{O}_{K_E}/\mathfrak{P}$ as $\mathbb{Z}/p\mathbb{Z}$-algebra.
Projecting onto the factor corresponding to $\mathfrak{p}$, we see that \[\mathcal{O}_{K_E}/\mathfrak{p} = \mathbb{Z}/p\mathbb{Z}[\widetilde{Nj_E}]\]
where $\widetilde{Nj_E}$ refers to the image under the reduction $\mathcal{O}_{K_E} \twoheadrightarrow \mathcal{O}_{K_E}/\mathfrak{p}$ and $\mathbb{Z}/p\mathbb{Z}[\widetilde{Nj_E}]$ is the $\mathbb{Z}/p\mathbb{Z}$-subalgebra generated by $\widetilde{Nj_E}$.  Again, the choice of $\mathfrak{p}$ ensures that $j_E$ is integral at $\mathfrak{p}$. Let $\mathcal{O}_{K_E, \mathfrak{p}}$ be the localization of $\mathcal{O}_{K_E}$ at $\mathfrak{p}$ and $\widetilde{j_E}$ be the image of $j_E$ under $\mathcal{O}_{K_E,\mathfrak{p}} \twoheadrightarrow \mathcal{O}_{K_E}/\mathfrak{p}$. Since $p \nmid N$, it is clear that 
$\mathbb{Z}/p\mathbb{Z}[\widetilde{Nj_E}] = \mathbb{Z}/p\mathbb{Z}[\widetilde{j_E}]$. Now suppose $\mathfrak{p}$ is a prime of supersingular reduction for the elliptic curve $E$. Then $E$ has a good reduction at $\mathfrak{p}$ and $\widetilde{j_E}$ equals the $j$-invariant of the reduced elliptic curve. Therefore $\widetilde{j_E}$ lies in a quadratic extension of $\mathbb{Z}/p\mathbb{Z}$ (Theorem~3.1 in \cite[V.3]{silvermanbook}). But $\mathcal{O}_{K_E}/\mathfrak{p} = \mathbb{Z}/p\mathbb{Z}[\widetilde{j_E}]$. Hence the degree of the extension $\mathcal{O}_{K_E}/\mathfrak{p}$ over $\mathbb{Z}/p\mathbb{Z}$ must be $\leq 2$. It follows that any supersingular prime $\mathfrak{p}$ of $E$ that lies outside our set $S$ satisfies $f_{\mathfrak{p}}(K_E|\mathbb{Q}) \leq 2$ as desired.    
\end{proof}

Lemma~\ref{lemma2.1} allows us to apply Plessis's criterion to an elliptic curve with infinitely many supersingular primes provided the Galois representation  \eqref{1.1} has a big image at a large supersingular prime. Thus, it is convenient to treat the CM case separately where the latter condition fails due to abelian behavior.

\begin{proposition}
\label{proposition2.2}
Let $E$ be an elliptic curve with complex multiplication defined over a number field $K$, and $L/K$ be a finite extension. Then Conjecture~\ref{conjecture1.2} holds $E$. Moreover, for each $\alpha \in L(E_{\emph{tor}})^{\times} \backslash \mu_{\infty}$, we have
\[h(\alpha) > 3^{-4d^2-4d-6}\]
where $d = [L: \mathbb{Q}]$.  
\end{proposition}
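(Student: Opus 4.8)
The plan is to reduce the CM case to the known Bogomolov property for abelian extensions, following the idea sketched in the introduction. Recall that if $E$ has CM by an order in an imaginary quadratic field $F$, then the extension $K(E_{\text{tor}})$ is abelian over $KF$: by the theory of complex multiplication, the torsion points of $E$ generate abelian extensions of the CM field (after adjoining $F$, if $F \not\subset K$). Consequently, for a finite extension $L/K$, the field $L(E_{\text{tor}})$ is contained in $M^{\text{ab}}$ for a suitable number field $M$ (one may take $M = LF$, or a finite extension thereof absorbing the conductor of the CM order). Then the theorem of Amoroso--Zannier gives the Weil-height Bogomolov property for $M^{\text{ab}}$, hence for $L(E_{\text{tor}})$, and Silverman's result gives the elliptic Bogomolov property for $E(M^{\text{ab}})$, hence for $E(L(E_{\text{tor}}))$. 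This establishes Conjecture~\ref{conjecture1.2} for $E$.

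For the explicit bound $h(\alpha) > 3^{-4d^2 - 4d - 6}$ with $d = [L:\mathbb{Q}]$, I would invoke the effective form of the Amoroso--Zannier (or Amoroso--Dvornicich) lower bound for the height on the maximal abelian extension of a number field. The key point is that $[LF : \mathbb{Q}] \le 2d$, and the explicit lower bounds in the literature (e.g. Amoroso--Zannier, or the refinement by Amoroso--Dvornicich for $\mathbb{Q}^{\text{ab}}$ combined with a relative version) take the shape $h(\alpha) \ge c(n)$ for $\alpha \in (M^{\text{ab}})^\times \setminus \mu_\infty$ where $n = [M:\mathbb{Q}]$ and $c(n)$ is something like $3^{-\text{(quadratic in } n)}$. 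Substituting $n \le 2d$ and tracking the constants should yield the stated bound; the exponent $-4d^2 - 4d - 6$ is consistent with a bound of the form $3^{-n^2 - n - \text{const}}$ evaluated at $n = 2d$.

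First I would state precisely the CM structure result: choose the imaginary quadratic field $F = \mathrm{End}(E) \otimes \mathbb{Q}$ and note that $L(E_{\text{tor}})/LF$ is abelian (citing the main theorem of CM, e.g. Silverman's advanced topics book); then observe $L(E_{\text{tor}}) \subseteq (LF)^{\text{ab}}$. Second, apply Amoroso--Zannier and Silverman to $(LF)^{\text{ab}}$ to get both Bogomolov properties. Third, for the quantitative part, cite the explicit version of the abelian height bound, set $n = [LF:\mathbb{Q}] \le 2[L:\mathbb{Q}] = 2d$, and simplify. One subtlety to address is that $F$ may not be contained in $K$, so one works with $LF$ rather than $L$; another is that if $\mathrm{End}(E)$ is a non-maximal order, the field generated by torsion points is still abelian over $LF$ (the ring class field considerations only enlarge by a bounded amount, already absorbed in passing to the full abelian closure).

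The main obstacle is purely bookkeeping: locating an explicit lower bound for the Weil height on $M^{\text{ab}}$ in the literature that is stated in a form from which $3^{-4d^2 - 4d - 6}$ follows cleanly after the substitution $n \le 2d$, and verifying that the constants in that bound are at least as strong as what is claimed. There is no conceptual difficulty — the abelian-ness of $L(E_{\text{tor}})$ over $LF$ is classical — but one must be careful that the cited effective theorem applies to $(LF)^{\text{ab}}$ (not merely $\mathbb{Q}^{\text{ab}}$) and that the dependence on the base field degree is the one used here.
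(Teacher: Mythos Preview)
Your proposal is correct and follows essentially the same route as the paper: reduce to $(LF)^{\text{ab}}$ via the theory of complex multiplication, invoke Amoroso--Zannier and Silverman for the two Bogomolov properties, and then plug $n=[LF:\mathbb{Q}]\le 2d$ into the explicit Amoroso--Zannier bound. The only correction is the precise shape of that bound: it is $h(\alpha)>3^{-n^2-2n-6}$ (not $3^{-n^2-n-\text{const}}$), and substituting $n\le 2d$ gives exactly $3^{-4d^2-4d-6}$.
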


\begin{proof}
Assume that $E$ has complex multiplication by an order in $\mathbb{Q}(\sqrt{-D})$ for some positive integer $D$. Set $K_D = K(\sqrt{-D})$ and $L_{D} = L(\sqrt{-D})$. Then, the theory of complex multiplication asserts that $K_D(E_{\text{tor}}) \subseteq K_D^{\text{ab}}$. In particular, $L(E_{\text{tor}}) \subseteq L_D(E_{\text{tor}}) \subseteq L_D^{\text{ab}}$. Therefore, an application of the results of Amoroso-Zannier \cite{abelianfield1} and Silverman \cite{silverman} stated in Section~\ref{section1} shows that $L(E_{\text{tor}})$ and $E\big(L(E_{\text{tor}})\big)$ have the Bogomolov property. For the explicit lower bound for Weil height, we need a quantitative refinement of the result used above. Let $F$ be a number field of degree $n$ and let $\alpha \in \overline{\mathbb{Q}}^{\times} \backslash \mu_{\infty}$ be an algebraic number so that $F(\alpha)/F$ is abelian. Amoroso and Zannier, in a subsequent paper \cite{abelianfield2}, have obtained the following explicit bound: 
\begin{equation}
\label{2.1}
h(\alpha) > 3^{-n^2-2n-6}.
\end{equation}
We take $F = L_D$. Thus $n = [L_D:\mathbb{Q}] \leq 2d$ and the inequality in the statement is a consequence of \eqref{2.1}. 
\end{proof}

Before proceeding towards the proof of Theorem~\ref{theorem1.4}, we present a simple observation to clarify the hypothesis of the theorem. 

\begin{lemma}
\label{lemma2.3}
Let $E$ be an elliptic curve over a number field $K$ and $L/K$ be a finite extension. Then, $E/K$ has infinitely many supersingular primes if and only if $E/L$ also has infinitely many supersingular primes.  
\end{lemma}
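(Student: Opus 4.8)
The plan is to prove the statement for a prime $\mathfrak{p}$ of $L$ lying over a prime $\mathfrak{q}$ of $K$ lying over a rational prime $p$, by relating supersingularity at $\mathfrak{p}$ directly to supersingularity at $\mathfrak{q}$. The key observation is that supersingularity is a property that depends only on the reduced elliptic curve over a finite field of characteristic $p$, and more precisely only on whether the Frobenius (or equivalently the $j$-invariant's position relative to the supersingular locus) is supersingular — and this is insensitive to base change to a larger finite field. So the essential content is: if $\overline{E}$ is an elliptic curve over a finite field $k$ of characteristic $p$ and $k'/k$ is a finite extension, then $\overline{E}$ is supersingular over $k$ if and only if $\overline{E}_{k'}$ is supersingular over $k'$; this follows since supersingularity is equivalent to the vanishing of $p$-torsion in $\overline{E}(\overline{k})$, or to the $j$-invariant lying in $\mathbb{F}_{p^2}$ together with the Hasse invariant vanishing — in any case a geometric condition stable under base field extension.

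The key steps, in order, would be: (1) Fix an integral Weierstrass model of $E$ over $\mathcal{O}_K$; this is simultaneously a model over $\mathcal{O}_L$, and for all but finitely many primes this model has good reduction, so we may restrict attention to primes of good reduction and ignore a finite set. (2) Given a rational prime $p$ with a prime $\mathfrak{q}\mid p$ of $K$ and a prime $\mathfrak{p}\mid\mathfrak{q}$ of $L$, note that the residue field $\mathcal{O}_L/\mathfrak{p}$ is a finite extension of $\mathcal{O}_K/\mathfrak{q}$, and the reduction of $E$ mod $\mathfrak{p}$ is just the base change to $\mathcal{O}_L/\mathfrak{p}$ of the reduction mod $\mathfrak{q}$. (3) Invoke the base-change invariance of supersingularity recalled above to conclude $E$ is supersingular at $\mathfrak{p}$ iff it is supersingular at $\mathfrak{q}$. (4) Conclude: if $E/K$ has infinitely many supersingular primes, then for each such $\mathfrak{q}$ pick any $\mathfrak{p}\mid\mathfrak{q}$ in $L$ (there is at least one), giving infinitely many supersingular primes of $L$ — here I should note that distinct $\mathfrak{q}$ give distinct $\mathfrak{p}$ since $\mathfrak{p}$ determines $\mathfrak{q}=\mathfrak{p}\cap\mathcal{O}_K$. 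Conversely, if $E/L$ has infinitely many supersingular primes $\mathfrak{p}$, then the primes $\mathfrak{q}=\mathfrak{p}\cap\mathcal{O}_K$ are supersingular for $E/K$; since each $\mathfrak{q}$ has only finitely many $\mathfrak{p}$ above it, infinitely many distinct $\mathfrak{p}$ force infinitely many distinct $\mathfrak{q}$.

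I do not expect any serious obstacle here; the statement is essentially bookkeeping once the correct definition of supersingular prime is pinned down. The one point requiring mild care is making sure the definition of ``supersingular prime'' in use is exactly ``prime of good reduction at which the reduced curve is supersingular'' and that a prime of bad reduction is by convention not supersingular — with this convention the finitely many bad primes cause no trouble since we discard them at the outset. A second minor point is the elementary finiteness fact that each prime of $K$ has only finitely many primes of $L$ above it, which handles the converse direction. I would present the argument concisely, citing Theorem~3.1 in \cite[V.3]{silvermanbook} for the characterization of supersingularity and its stability under extension of the finite base field.
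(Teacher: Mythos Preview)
Your proposal is correct and follows essentially the same approach as the paper: both arguments rest on the observation that supersingularity of the reduced curve is a geometric property (depending only on the curve over the algebraic closure of the residue field) and hence is invariant under extension of the finite base field, together with the bookkeeping that bad reduction occurs at only finitely many primes and that each prime of $K$ has only finitely many primes of $L$ above it. The only cosmetic difference is that you discard the bad primes once at the outset via a fixed integral model, whereas the paper treats the good-reduction issue separately in each direction.
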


\begin{proof}
For an elliptic curve over a finite field, the definition of supersingularity \cite[p.145]{silvermanbook} concerns only its properties over algebraic closure. Therefore, if $E/K$ has supersingular reduction at a prime $\mathfrak{p}$ of $K$, then $E/L$ has supersingular reduction at each prime $\mathfrak{P}$ of $L$ satisfying $\mathfrak{P} \mid \mathfrak{p}$. Note that in this situation, $E/L$ automatically has a good reduction at each $\mathfrak{P}$ \cite[p.197]{silvermanbook}. Conversely, if $E/L$ is supersingular at a prime $\mathfrak{P}$ of $L$, then $E/K$ is supersingular at $\mathfrak{p} = \mathfrak{P} \cap K$ provided $E$ has a good reduction at $\mathfrak{p}$. Since any elliptic curve has only finitely many primes of bad reduction, the lemma follows from these two observations. 
\end{proof}

\paragraph{Proof of Theorem~\ref{theorem1.4}.} Let the notation be as in the statement of the theorem. We have already dealt with the CM case in Proposition~\ref{proposition2.2}. Assume that $E$ does not have complex multiplication. As before, write $K_E = \mathbb{Q}(j_E)$, and let $E_0$ be a model of $E$ over the field $K_E$. For concreteness, fix an isomorphism $\varphi: E_0 \xrightarrow{\cong} E$ defined over a finite extension $K'/K$. In the light of Lemma~\ref{lemma2.3}, the hypothesis of this theorem ensures that $E/K'$ has infinitely many supersingular primes. Thus, one can use the isomorphism $\varphi$ along with the descent statement in Lemma~\ref{lemma2.3} to conclude that $E_0/K_E$ also has infinitely many primes of supersingular reduction. Moreover, by Lemma~\ref{lemma2.1}, all but finitely many supersingular primes of $E_0/K_E$ have residue degree $1$ or $2$. Define $L'$ as the Galois closure of $K'L$ over $K_E$. We want to apply Plessis's criterion for the pair $(E_0/K_E, L'/K_E)$. Since $E_0$ does not have complex multiplication, a celebrated theorem of Serre \cite{serre} shows that the Galois representation \eqref{1.1} is surjective for all but finitely many rational primes $p$. Also, the parameter $d_v(L')$ is uniformly upper bounded by the degree of the extension $L'/K_E$. Thus, there exists a large supersingular prime $v$ so that (i)-(v) in the statement of Theorem~\ref{theorem1.3} holds for the pair $(E_0/K_E, L'/K_E)$. As a consequence $L'(E_{0, \text{tor}})$ and $E_0\big(L'(E_{0, \text{tor}})\big)$ have the Bogomolov property. To finish off the proof of the theorem, it suffices to check that $L'(E_{ \text{tor}})$ and $E\big(L'(E_{\text{tor}})\big)$ have the Bogomolov property. The first part is obvious since the isomorphism $\varphi$ induces an equality $L'(E_{0, \text{tor}}) = L'(E_{\text{tor}})$. Finally, the uniqueness of N\'eron-Tate height (Theorem~9.3 in \cite[VIII.9]{silvermanbook}) shows that the pullback of the N\'eron-Tate height on $E(\overline{\mathbb{Q}})$ to $E_0(\overline{\mathbb{Q}})$ via $\varphi$ equals the N\'eron-Tate height on $E_0(\overline{\mathbb{Q}})$. The preimage of $E\big(L'(E_{\text{tor}})\big)$ under $\varphi$ is precisely $E_0\big(L'(E_{\text{tor}})\big)$. Therefore $E\big(L'(E_{\text{tor}})\big)$ also has the Bogomolov property. \hfill $\square$   

\paragraph{Proof of Corollary~\ref{corollary1.5}.}
Let $E$ be an elliptic curve over a number field $K$ so that $K_E = \mathbb{Q}(j_E)$ has at least one real embedding. As before, it suffices to deal with the case when $E$ does not have complex multiplication. Recall the notation $E_0$, $\varphi$, and $K'$ from the proof of Theorem~\ref{theorem1.4}. Since $K_E$ has one real embedding, by the theorem of Elkies \cite{elkies} we know that $E_0/K_E$ has infinitely many supersingular primes. Now, one can use the isomorphism $\varphi$ and Lemma~\ref{lemma2.3} to deduce that $E/K'$, and hence $E/K$, has infinitely many supersingular primes. Thus, the assertion is a consequence of Theorem~\ref{theorem1.4}.

One can also write down an explicit lower bound for the Weil height in terms of a few relevant parameters. 

\begin{proposition}
\label{proposition2.4}
Let the notation be as in Theorem~\ref{theorem1.4} and assume that $E$ does not have complex multiplication. Let $v$ be a place of $K_E$ so that the pair $(E_0/K_E, L'/K_E)$ described in the proof of Theorem~\ref{theorem1.4} satisfies (i)-(iii) in Theorem~\ref{theorem1.3}. Then, for each $\alpha \in L(E_{\emph{tor}})^{\times} \backslash \mu_{\infty}$, we have 
\[h(\alpha) \geq \frac{1}{4^{p^2d_v(L')}+1}\Big(\frac{\log p}{d_v(L')(40\sqrt{2}+2)[K_E:\mathbb{Q}]p^{2p^2d_v(L')+2}}\Big)^{2 + \frac{4}{p^{p^2d_v(L')/4}-2}}.\]
\end{proposition}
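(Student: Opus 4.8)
The plan is to derive Proposition~\ref{proposition2.4} directly from Theorem~\ref{theorem1.3}, applied to the pair $(E_0/K_E, L'/K_E)$ constructed in the proof of Theorem~\ref{theorem1.4}, together with the field containment $L(E_{\text{tor}}) \subseteq L'(E_{0,\text{tor}})$.

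First I would recall the relevant objects from that proof: $E_0$ is a model of $E$ over $K_E = \mathbb{Q}(j_E)$, the isomorphism $\varphi \colon E_0 \xrightarrow{\cong} E$ is defined over a finite extension $K'/K$, and $L'$ is the Galois closure of $K'L$ over $K_E$. Since $E$ is assumed to have no complex multiplication, neither does $E_0$. Because $j_E \in K$ we have $K_E \subseteq K$, so there is a tower $K_E \subseteq K \subseteq L \subseteq K'L \subseteq L'$ of number fields; in particular $L'/K_E$ is a finite Galois extension, which is exactly the shape of input required by Theorem~\ref{theorem1.3}. By hypothesis the place $v$ of $K_E$, lying over a rational prime $p$, is such that $(E_0/K_E, L'/K_E)$ satisfies conditions (i)--(iii) of Theorem~\ref{theorem1.3}.

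Next I would invoke the first (Weil-height) conclusion of Theorem~\ref{theorem1.3} for the pair $(E_0/K_E, L'/K_E)$, which requires only hypotheses (i)--(iii): it yields, for every $\beta \in L'(E_{0,\text{tor}})^{\times} \setminus \mu_{\infty}$, the lower bound in the statement of Theorem~\ref{theorem1.3} with $K$ replaced by $K_E$ and $L$ replaced by $L'$ throughout, so that the parameters that occur are $d_v(L')$ and $[K_E : \mathbb{Q}]$; this is precisely the inequality claimed in the proposition. Finally I would transfer this bound to $L(E_{\text{tor}})$. Since $\varphi$ is defined over $K' \subseteq L'$, it carries the torsion subgroup of $E_0$ bijectively to that of $E$, so $L'(E_{0,\text{tor}}) = L'(E_{\text{tor}})$, exactly as noted in the proof of Theorem~\ref{theorem1.4}; combined with $L \subseteq L'$ this gives $L(E_{\text{tor}}) \subseteq L'(E_{\text{tor}}) = L'(E_{0,\text{tor}})$. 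Consequently every $\alpha \in L(E_{\text{tor}})^{\times} \setminus \mu_{\infty}$ lies in $L'(E_{0,\text{tor}})^{\times} \setminus \mu_{\infty}$, and the bound from the previous step applies to it verbatim. The argument is essentially bookkeeping once Theorem~\ref{theorem1.3} is in hand, so there is no real obstacle; the only points demanding care are to perform the substitution of $(K_E, L')$ for $(K, L)$ consistently in the height formula of Theorem~\ref{theorem1.3}, and to note that hypotheses (iv)--(v) play no role here because only the Weil-height statement --- not the elliptic Bogomolov property --- is being asserted.
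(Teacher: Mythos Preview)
Your proposal is correct and follows exactly the paper's approach: the paper's entire proof is the one-line remark that the inequality follows from Theorem~\ref{theorem1.3} applied to the pair $(E_0/K_E, L'/K_E)$, and you have simply spelled out the bookkeeping (the substitution $(K,L)\mapsto(K_E,L')$ in the height formula and the containment $L(E_{\text{tor}}) \subseteq L'(E_{0,\text{tor}})$) that this one line entails.
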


The argument given in the proof of Theorem~\ref{theorem1.4} shows that there exist infinitely many places $v$ satisfying the hypothesis of the proposition.  

\begin{proof}
Follows from Theorem~\ref{theorem1.3} applied to the pair $(E_0/K_E, L'/K_E)$.   
\end{proof}

An explicit lower bound for N\'eron-Tate height seems more difficult due to the ineffectivity of the elliptic equidistribution result; see Remark~1.7 in \cite{plessis}.

\section*{Acknowledgments}
I wish to thank an anonymous referee for carefully reading the manuscript and suggesting numerous improvements regarding its presentation.

\end{document}